\patchcmd{\quote}{\rightmargin}{\leftmargin 0.5 em \it \rightmargin}{}{}
\newtheorem{defi}{Definition}[section]
\newtheorem{prop}[defi]{Proposition}
\newtheorem{lem}[defi]{Lemma}
\newtheorem{rem}[defi]{Remark}
\newtheorem{ex}[defi]{Example}
\newtheorem{thm}[defi]{Theorem}
\newtheorem{conj}{Conjecture}
\theoremstyle{definition}
\newtheorem*{prob}{Open Problem}
\def\Z{\mathbb{Z}}
\def\eps{\varepsilon}
\begin{document}

\title{A problem on partial sums in abelian groups}

\author{S. Costa}
\address{DICATAM - Sez. Matematica, Universit\`a degli Studi di Brescia, Via
Branze 43, I-25123 Brescia, Italy}
\email{simone.costa@unibs.it}

\author{F. Morini}
\address{Dipartimento di Scienze Matematiche, Fisiche e Informatiche, Universit\`a di Parma,
Parco Area delle Scienze 53/A, I-43124 Parma, Italy}
\email{fiorenza.morini@unipr.it}

\author{A. Pasotti}
\address{DICATAM - Sez. Matematica, Universit\`a degli Studi di Brescia, Via
Branze 43, I-25123 Brescia, Italy}
\email{anita.pasotti@unibs.it}
\thanks{The results of this paper will be presented at HyGraDe 2017}
 
\author{M.A. Pellegrini}
\address{Dipartimento di Matematica e Fisica, Universit\`a Cattolica del Sacro Cuore, Via Musei 41,
I-25121 Brescia, Italy}
\email{marcoantonio.pellegrini@unicatt.it}

\begin{abstract}
In this paper we propose a conjecture concerning partial sums
of an arbitrary finite subset of an abelian group, that naturally arises 
investigating simple Heffter systems.
Then, we show its connection with related open problems and we present some results
about the validity of these conjectures.
\end{abstract}

\keywords{partial sum; Heffter system; cycle system}
\subjclass[2010]{05C25, 05C38}

\maketitle

\section{Introduction}

It is well known that difference methods  have a
primary role in the construction of combinatorial
designs of various kinds, see \cite{HB,BJL}. The continuous search
for more efficient ways to use these methods often
leads to intriguing problems which are very difficult
despite their easy statements.
Some
examples are the conjectures proposed by Alspach \cite{BH} and by Archdeacon et al. \cite{ADMS}. 

In order to describe these conjectures we introduce the concept of partial sums.
Let $A$ be a finite list of elements of a group $(G,+)$.
Let $(a_1,a_2,\ldots,a_k)$ be an ordering of the elements in $A$ and define the \emph{partial sums}
$s_1,s_2,\ldots,s_k$ by the formula $s_j=\sum_{i=1}^{j} a_i$ $(1\leq j \leq k)$.
An ordering of $A$ is said to be \emph{simple} if all the partial sums are distinct.

Several years ago Alspach made the following conjecture, whose validity would shorten some cases of known proofs 
about the existence of cycle decompositions.

\begin{conj}\label{Conj:als}
Let $A\subseteq \mathbb{Z}_v\setminus\{0\}$ such that
$\sum_{a\in A}a\neq0$. Then there exists an ordering of the elements of $A$ such that the partial sums are all
distinct and nonzero.
\end{conj}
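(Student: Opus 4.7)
The plan is to recast Conjecture \ref{Conj:als} as a reachability problem and to attempt a proof by strong induction on $k=|A|$. Fix $A\subseteq\Z_v\setminus\{0\}$ of cardinality $k$ with $\sigma:=\sum_{a\in A}a\neq 0$, and consider the auxiliary digraph $\mathcal{D}$ on vertex set $\Z_v$ whose arcs are labelled by the elements of $A$ (an arc $s\to s+a$ for each $a\in A$ and each $s\in\Z_v$). A simple ordering of $A$ with nonzero partial sums is precisely a walk from $0$ to $\sigma$ in $\mathcal{D}$ that uses each label in $A$ exactly once, never revisits a previously met vertex, and avoids $0$ except at its starting point.

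The base case $k=1$ is immediate since $A=\{a\}$ with $a\neq 0$. For the inductive step I would first pick an element $a\in A$ with $\sigma-a\neq 0$; such an $a$ exists unless $A=\{\sigma\}$, which is trivial. Applying the inductive hypothesis to $A\setminus\{a\}$ yields a simple ordering $(b_1,\ldots,b_{k-1})$ whose partial sums $t_0=0,t_1,\ldots,t_{k-1}=\sigma-a$ are distinct and satisfy $t_j\neq 0$ for $j\geq 1$. I would then try to insert $a$ at some position $j\in\{0,1,\ldots,k-1\}$: the resulting sequence of partial sums is $t_0,\ldots,t_j,\,t_j+a,\,t_{j+1}+a,\ldots,t_{k-1}+a=\sigma$. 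Insertion at $j$ is admissible unless one of the following occurs: $t_j+a=0$, or $t_j+a$ equals some $t_i$ with $i<j$, or some shifted sum $t_{i'}+a$ with $i'>j$ equals $0$ or coincides with an unshifted sum $t_i$ with $i\leq j$.

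The crux, and \textbf{the principal obstacle}, is to show that at least one position $j\in\{0,1,\ldots,k-1\}$ escapes every forbidden condition simultaneously. Each coincidence $t_{i'}+a=t_i$ with $i\leq j<i'$ rules out exactly the slots $j\in\{i,\ldots,i'-1\}$, and an unfavourable concentration of equations of the form $a=t_i-t_{i'}$ can in principle cover every insertion slot. To control this one must either choose $a\in A$ cleverly, so that the equation $a=t_i-t_{i'}$ has few solutions in $(i,i')$, or modify the inductive ordering of $A\setminus\{a\}$ to break the offending coincidences. An averaging argument over $a\in A$ shows that in the mean the number of obstructions per slot is moderate, but upgrading this to a pointwise guarantee has, to the best of our knowledge, resisted every direct attempt; this is precisely why Alspach's conjecture has remained open for decades.

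A realistic short-term plan is therefore to restrict to structured families of $A$, for instance arithmetic progressions, symmetric sets $A=-A$, or sets with small doubling, where the system of forbidden equations becomes tractable and the insertion step can be carried out explicitly. Such restricted cases could then serve both as a body of evidence for the conjecture and as a testbed for the inductive machinery sketched above, which would have to be substantially refined before any frontal attack on the general case stands a chance.
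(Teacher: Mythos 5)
You have been asked to prove a statement that the paper itself does not prove: Conjecture \ref{Conj:als} is Alspach's conjecture, which is presented in the Introduction as a well-known \emph{open problem}. The paper offers no argument for it; it only records the partial results of Bode and Harborth (validity for $|A|=v-1$, $|A|=v-2$, and $v\leq 16$ by computer, plus an unproved claim for $|A|\leq 5$) and uses the conjecture as motivation for the authors' own Conjecture \ref{Conj:nostra}. So there is no ``paper proof'' to compare against, and any complete proof you produced would be a genuinely new result. To your credit, your proposal does not pretend to be such a proof: it is an explicitly incomplete research sketch, and your self-assessment of where it breaks down is accurate.

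Concerning the sketch itself: the reformulation as a label-once walk avoiding $0$ and the insertion-based induction are set up correctly, and your inventory of the forbidden slots (a collision $t_{i'}+a=t_i$ with $i\leq j<i'$ kills exactly the slots $j\in\{i,\ldots,i'-1\}$, and one must also avoid $t_{i'}+a=0$) is the right bookkeeping. The genuine gap is exactly the one you name: nothing prevents the interval system of forbidden slots from covering all of $\{0,1,\ldots,k-1\}$, and the averaging over $a\in A$ only bounds the \emph{expected} number of obstructions, not the minimum over insertion positions. A further point worth flagging is that the induction as stated is not self-contained even in spirit: after deleting $a$ you need a simple ordering of $A\setminus\{a\}$ whose partial sums avoid $0$, which is again an instance of the conjecture, so the inductive hypothesis carries the full strength of the open problem at every level; there is no ``easier'' base structure being exploited. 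Your suggested retreat to structured families is reasonable, but note that the known results go in a different direction (very large $|A|$ relative to $v$, or very small $|A|$), and that the paper's Theorems \ref{thm:8}, \ref{thm:9} and \ref{A5} handle the small-cardinality regime for the related Conjectures \ref{Conj:nostra} and \ref{Conj:ADMS} by direct case analysis on the zero-sum subsets of $A$ rather than by any insertion argument. In short: no error of reasoning, but no proof either, and the statement remains a conjecture.
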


The only published paper on this problem is by Bode and Harborth \cite{BH}. There, the authors proved that Conjecture \ref{Conj:als}
is valid if $|A|=v-1$ or $|A|=v-2$ or if $v\leq 16$ (the latter was obtained by computer verification).
Also they stated, without proof, that the conjecture is true if $|A|\leq5$.

In \cite{ADMS} Archdeacon et al. proposed a related conjecture.
\begin{conj}\label{Conj:ADMS}
Let $A\subseteq \mathbb{Z}_v\setminus\{0\}$.
Then there exists an ordering of the elements of $A$ such that the partial sums are all
distinct.
\end{conj}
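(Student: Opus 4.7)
My plan is to reduce Conjecture \ref{Conj:ADMS} to Conjecture \ref{Conj:als} by a case split on the total sum $\sigma:=\sum_{a\in A}a$. If $\sigma\neq 0$, then Conjecture \ref{Conj:ADMS} is a strict weakening of Conjecture \ref{Conj:als}: an ordering whose partial sums are distinct \emph{and} nonzero in particular has distinct partial sums. So in this case a proof of Alspach's conjecture suffices.

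If instead $\sigma=0$, fix any $a\in A$ and set $A'=A\setminus\{a\}$, so that $\sum_{b\in A'}b=-a\neq 0$. By the first case applied to $A'$, there is an ordering $(a_1,\ldots,a_{k-1})$ of $A'$ whose partial sums $s_1,\ldots,s_{k-1}$ are pairwise distinct and all nonzero. Appending $a$ at the end produces $s_k=\sigma=0$, which differs from each earlier $s_j$, so the full ordering is simple. This reduces Conjecture \ref{Conj:ADMS} entirely to Conjecture \ref{Conj:als}.

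Independently of Alspach, the extra freedom to allow $s_j=0$ suggests an attack by induction on $|A|$: given a simple ordering of $A\setminus\{a\}$, insert $a$ at a position that avoids collisions. Inserting $a$ between $a_j$ and $a_{j+1}$ shifts every subsequent partial sum by $a$, and one would like to show that the number of forbidden insertion positions is always strictly less than the $k$ available slots. A companion approach is a local exchange argument: whenever $s_i=s_j$ for some $i<j$, permute the block $(a_{i+1},\ldots,a_j)$ to destroy the collision without creating new ones.

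The hard part is essentially the difficulty of Alspach's conjecture itself. The reduction above shows that Conjecture \ref{Conj:ADMS} is no harder than Conjecture \ref{Conj:als}, but gives nothing more. The independent insertion or exchange arguments tend to break down for sets $A$ containing many short zero-sum subsets, since these produce several simultaneous collisions; such configurations are precisely where Alspach is known to be subtle. Progress therefore seems most feasible either for small cardinalities $|A|$, or under structural restrictions on $A$ (for instance symmetric sets, or sets of integers concentrated in a short interval), which one expects the remainder of the paper to exploit.
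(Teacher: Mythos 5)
The statement you were asked about is a \emph{conjecture}: the paper does not prove it, and it remains open. What you have written is not a proof of Conjecture \ref{Conj:ADMS} but a correct reduction of it to Conjecture \ref{Conj:als}, which is itself open. Your two-case argument (if $\sum_{a\in A}a\neq 0$ apply Alspach directly; if the sum is zero, delete one element $a$, note the remaining sum is $-a\neq 0$, apply Alspach to get distinct nonzero partial sums, and append $a$ to close with the single zero partial sum) is exactly the argument of Proposition 1.1 of Archdeacon, Dinitz, Mattern and Stinson, which this paper cites as the known implication ``Conjecture \ref{Conj:als} implies Conjecture \ref{Conj:ADMS}.'' So the reduction is sound, but it transfers the entire burden to an unproven statement; you acknowledge this yourself, and your remaining suggestions (insertion of a new element into a simple ordering, or local exchange of a block between two colliding partial sums) are heuristics without a counting argument showing the forbidden positions never exhaust the available slots — precisely the point where all known attacks on these conjectures stall. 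The only unconditional content in the literature, and in this paper, is verification for small $|A|$ (the cited authors handle $|A|\leq 6$; Sections \ref{4} and \ref{5} of this paper prove the related Conjecture \ref{Conj:nostra} for $|A|\leq 9$ and a group-theoretic variant for $|A|\leq 5$ by explicit case analysis). In short: nothing in your proposal is wrong, but nothing in it proves the statement, and the one rigorous step it contains is already known.
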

In \cite{ADMS} the authors proved that Conjecture \ref{Conj:als} implies Conjecture \ref{Conj:ADMS}.
Then they proved that their conjecture is valid if $|A|\leq 6$ and made also a computer verification for $v\leq 25$.

Here, we propose the following conjecture:

\begin{conj}\label{Conj:nostra}
Let $(G,+)$ be an  abelian group. Let $A$ be a finite subset of $G\setminus\{0\}$ 
such that no $2$-subset $\{x,-x\}$ is contained in $A$ and 
with the property that $\sum_{a\in A} a=0$.
Then there exists an ordering of the elements of $A$ such that the partial sums are all
distinct.
\end{conj}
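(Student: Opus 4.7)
The plan is to proceed by induction on $k = |A|$, using the hypotheses both to obtain strong base cases and to relate the statement to Alspach's conjecture. A first observation is that if any element $a\in A$ is placed last in the ordering, the requirement becomes that the ordering of $A' := A\setminus\{a\}$ has $k-1$ distinct and nonzero partial sums; since $\sum_{b\in A'} b = -a \neq 0$, this is exactly Conjecture \ref{Conj:als} applied to $A'$. Thus in the cyclic case $G=\Z_v$ our conjecture follows from Alspach's, and every partial result in \cite{BH} transfers directly to Conjecture \ref{Conj:nostra}.

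The key point for the base cases is that any collision $s_i = s_j$ among the partial sums forces a nonempty proper consecutive sub-block of the ordering to sum to zero. The hypotheses $0\notin A$ and $\{x,-x\}\not\subseteq A$ rule out sub-blocks of sizes $1$ and $2$; using $\sum_{a\in A}a=0$, they also rule out the complementary sizes $k-1$ and $k-2$. Consequently, for $k\leq 5$ \emph{every} ordering of $A$ is admissible, giving the conjecture for free in that range. The first genuinely nontrivial case is $k=6$, where a consecutive $3$-block summing to zero can obstruct a given ordering and one must choose the permutation with care.

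For the inductive step with $k\geq 6$ I would attempt a \emph{contraction} reduction: pick $a,b\in A$ with $a+b \notin A\cup (-A)\cup\{0\}$ and form $A' := (A\setminus\{a,b\})\cup\{a+b\}$, which has size $k-1$, still sums to $0$, and still contains no $\{x,-x\}$ pair. Applying the induction hypothesis yields an ordering of $A'$ of the form $c_1,\ldots,c_{j-1},(a+b),c_{j+1},\ldots,c_{k-1}$. Replacing $(a+b)$ by the pair $(a,b)$ or $(b,a)$ produces a candidate ordering of $A$ in which the only new partial sum to verify is the one obtained by inserting $a$ (respectively $b$) at position $j$. Since each potential collision determines a single value of $a$ (or $b$), a counting argument should allow me to choose the contraction so that at least one of the two insertions produces no repetition.

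The main obstacle is to guarantee the existence of a valid contraction when $A$ is ``dense'' in $G$, that is when $A\cup(-A)$ occupies a large fraction of the ambient group: then very few pairs have $a+b$ outside $A\cup(-A)$, and the counting argument breaks down. This is essentially the same difficulty that blocks the full proofs of Conjectures \ref{Conj:als} and \ref{Conj:ADMS}. Moreover, passing from $\Z_v$ to a general abelian group resists a simple projection argument, since distinctness of partial sums is not preserved by group homomorphisms. A realistic intermediate target is therefore to establish the conjecture for small $|A|$ (say up to $7$ or $8$) via a refined case analysis exploiting the sum-zero hypothesis, and, separately, for cyclic groups via the Alspach-type reduction above.
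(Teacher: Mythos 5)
The statement you are asked to prove is an open conjecture: the paper does not prove it either, but only establishes it for $|A|\leq 9$ (Theorems \ref{thm:8} and \ref{thm:9}) and verifies it by computer for small groups. So your proposal should be judged as a partial result plus a strategy. The parts of your write-up that are actually rigorous coincide with observations already in the paper: the fact that a collision $s_i=s_j$ forces a consecutive block summing to $0$, that the hypotheses exclude block sizes $1$, $2$, $k-1$, $k-2$ (the paper phrases this as: any zero-sum proper subset $B$ satisfies $3\leq|B|\leq|A|/2$), and hence that every ordering works for $|A|\leq 5$; and the conditional reduction to Conjecture \ref{Conj:als} in the cyclic case (the paper instead notes the implication from Conjecture \ref{Conj:ADMS}, which is equivalent in spirit). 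One small inaccuracy: the Bode--Harborth results for $|A|=v-1,v-2$ do not transfer usefully, since the no-$\{x,-x\}$ hypothesis forces $|A|\leq(v-1)/2$; only the $v\leq 16$ verification carries over. Beyond $|A|\leq 5$ you prove nothing, whereas the paper pushes an explicit case analysis (organized by which $3$- and $4$-subsets sum to zero and how they intersect) up to $|A|=9$.

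The genuine gap is in your inductive contraction step, and it is not only the ``density'' obstruction you name. Even when a pair $a,b$ with $a+b\notin A\cup(-A)\cup\{0\}$ exists, the induction hypothesis hands you a single uncontrolled simple ordering of $A'=(A\setminus\{a,b\})\cup\{a+b\}$; the one new partial sum $s_{j-1}+a$ (or $s_{j-1}+b$) must avoid $k-2$ prescribed values, and you have only two choices to play against them. There is no counting argument over ``which contraction to choose,'' because different contractions produce different sets $A'$ and hence incomparable orderings from the induction hypothesis — you cannot union over them. To make this work you would need either a strengthened induction hypothesis (e.g.\ control over the position of $a+b$ or over the set of partial sums realized), or many valid orderings of each $A'$, neither of which you establish. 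As written, the inductive step does not close even in the non-dense regime, so the proposal proves the conjecture only for $|A|\leq 5$ — strictly less than what the paper obtains by direct case analysis.
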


Clearly if $G=\Z_v$, Conjecture \ref{Conj:nostra} immediately follows from Conjecture \ref{Conj:ADMS}.
Indeed we believe that Conjecture \ref{Conj:ADMS} can be stated considering a subset of an abelian group and not necessarily of a cyclic one. 
About that, we proved by computer that Conjecture \ref{Conj:ADMS} is valid for any abelian group of order $\leq 23$.
Also, we have to point out that in \cite{ADMS}, proving their Theorems 3.1 and 3.2, the authors do not use
the hypothesis that $A$ is a subset of a cyclic group. In fact their proofs for $|A|\leq 6$ work, more in general, in an abelian group. 

Another motivation for extending Conjectures \ref{Conj:als} and \ref{Conj:ADMS} from cyclic groups to any (abelian) group is
because of their natural connection with the concept of sequenceable (or R-sequenceable) group, see \cite{E}.
In particular, Alspach et al. recently proved that any finite abelian group is either sequenceable or R-sequenceable, 
confirming the Friedlander-Gordon-Miller conjecture \cite{Al}.

We came across the problem of Conjecture \ref{Conj:nostra} studying Heffter systems, see \cite{H}, as explained in Section \ref{2}.
In Section \ref{3} we show how some known conjectures about graphs with prescribed edge-lengths can be stated in terms of partial sums
and we propose a related open problem.
In Section \ref{4} we prove the validity of Conjecture \ref{Conj:nostra} for subsets $A$ of size less than $10$;
we remark that we have also checked by computer the validity of our conjecture for abelian groups of order not exceeding $27$.
Finally, some observations about the above conjectures in the nonabelian case are presented in Section \ref{5}.

\section{Heffter systems and cyclic cycle systems}\label{2}

 Given an odd positive integer $v$,
an \emph{half-set} $A$ of $\Z_v$ is a subset of $\Z_v\setminus\{0\}$ of size $(v-1)/2$ such that 
no $2$-subset $\{x,-x\}$ is contained in $A$.
A \emph{Heffter system} $D(v,k)$
is a partition of an half-set of $\Z_v$ into parts of size $k$ such that the elements in each part sum to $0$.
Heffter himself introduced these systems to construct Steiner triple systems, see \cite{H}.
In \cite{A}, Archdeacon presented the related concept of a \emph{Heffter array}, which has various applications, see \cite{ABD,ADDY,CMPP,DM,DW}. 
A Heffter system is said to be \emph{simple} if each part admits a simple ordering.
Archdeacon in \cite{A} proved that a simple Heffter system $D(v,k)$ 
gives rise to a cyclic $k$-cycle system of order $v$, namely a decomposition of the complete graph $K_v$ of order $v$ into
cycles of length $k$ admitting $\Z_v$ as an automorphism group acting sharply transitively on the vertices.
We recall the following result.

\begin{prop}
A $k$-cycle system $\mathcal{C}$ of order $v$ is sharply vertex-transitive under $\Z_v$ if and only if, up to isomorphisms,
the following conditions hold:
\begin{itemize}
\item the set of vertices of $K_v$ is $\Z_v$;
\item for all $C=(c_1,c_2,\ldots,c_k) \in \mathcal{C}$, also $C+1:=(c_1+1,c_2+1,\ldots,c_k+1)\in \mathcal{C}$.  
\end{itemize} 
\end{prop}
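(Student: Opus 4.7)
The proposition is essentially an unpacking of the definition of ``sharply vertex-transitive under $\Z_v$'', so the plan is to verify each direction of the biconditional by a direct translation between group action data and orbit/labeling data.

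For the ``$\Leftarrow$'' direction I would assume the two bulleted conditions hold. The map $\tau\colon \Z_v\to\mathrm{Sym}(\Z_v)$ sending $a$ to the translation $x\mapsto x+a$ is an injective homomorphism whose image acts sharply transitively on $\Z_v$, and every translation is an automorphism of $K_v$. By the second bullet, $\tau(1)$ sends any cycle $C\in\mathcal{C}$ to the cycle $C+1\in\mathcal{C}$; iterating (and using $\tau(-1)=\tau(1)^{v-1}$) shows that $\tau(a)$ preserves $\mathcal{C}$ for every $a\in\Z_v$. Hence $\tau(\Z_v)$ is a sharply vertex-transitive subgroup of $\mathrm{Aut}(\mathcal{C})$, as required.

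For the ``$\Rightarrow$'' direction I would start with a subgroup $H\leq\mathrm{Aut}(\mathcal{C})$ isomorphic to $\Z_v$ acting sharply transitively on the vertex set $V$ of $K_v$. Fix a base vertex $v_0\in V$; by sharp transitivity, the orbit map $H\to V$, $h\mapsto h(v_0)$, is a bijection. Composing with an isomorphism $H\cong\Z_v$ that sends a chosen generator to $1$, I obtain a relabeling of the vertices by elements of $\Z_v$. Under this labeling, the generator of $H$ acts as $x\mapsto x+1$, since translation by $1$ is precisely the image of the generator under the regular representation. This is the sense in which the first bullet holds \emph{up to isomorphism} of the cycle system, and because $H\leq\mathrm{Aut}(\mathcal{C})$, for every $C=(c_1,\dots,c_k)\in\mathcal{C}$ the translate $C+1=(c_1+1,\dots,c_k+1)$ is also a cycle of $\mathcal{C}$, giving the second bullet.

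Neither direction involves a genuine obstacle; the only point requiring care is the ``only if'' direction, where one must explain clearly that ``up to isomorphism'' permits the relabeling of $V$ by $\Z_v$ in such a way that the given sharply transitive $\Z_v$-action becomes the regular action by translation. Once that identification is made, the orbit condition $C+1\in\mathcal{C}$ is immediate from the fact that the generator of $H$ is an automorphism of $\mathcal{C}$.
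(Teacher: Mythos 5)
Your proof is correct. The paper states this proposition as a recalled result (from Buratti's work on cycle decompositions with sharply vertex-transitive automorphism groups) and gives no proof of its own, so there is nothing to compare against; your argument — identifying the sharply transitive $\Z_v$-action with the regular action by translations via the orbit map in one direction, and verifying that translations preserve $\mathcal{C}$ by iterating the generator in the other — is the standard and expected one.
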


Clearly, to describe a cyclic  $k$-cycle system of order $v$ it is sufficient to show a complete system $\mathcal{B}$
of representatives for the orbits of $\mathcal{C}$ under the action of $\Z_v$. The elements of $\mathcal{B}$
are called \emph{base cycles} of $\mathcal{C}$. 

The existence of (cyclic) cycle systems has been widely investigated, see \cite{BR,BDF}. 
In order to explain how to construct the cycles starting from a Heffter system
we have to introduce the concept of list of differences from a cycle and to show its usefulness 
in constructing cyclic cycle systems, see \cite{B}.

 \begin{defi}
Let  $C=(c_1,c_2,\ldots,c_{k})$ be a $k$-cycle with
vertices in an abelian group $G$.
The multiset
$$\Delta C = \{\pm(c_{h+1}-c_{h})\ |\ 1\leq h \leq k\},$$
where   the subscripts are taken modulo $k$,
is called the \emph{list of differences} from $C$.
\end{defi}

More generally, given a set $\mathcal{B}$ of $k$-cycles with vertices
in $G$, by $\Delta \mathcal{B}$ one means the union (counting
multiplicities) of all multisets $\Delta C$,
where  $C\in \mathcal{B}$.

\begin{thm}\label{thm:basecycles}
Let $\mathcal{B}$ be a set of $k$-cycles with vertices in $\Z_{v}$.
If $\Delta\mathcal{B}=\Z_v\setminus\{0\}$ then $\mathcal{B}$ is set of base cycles of
a cyclic $k$-cycle system of order $v$.
\end{thm}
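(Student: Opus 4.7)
The strategy is to set $\mathcal{C}:=\{C+j\ :\ C\in\mathcal{B},\ j\in\Z_v\}$ and to verify that $\mathcal{C}$ is a $k$-cycle decomposition of $K_v$. Since the family is closed under the translation action of $\Z_v$ by construction, the preceding proposition will then identify $\mathcal{B}$ as a complete set of base cycles of a cyclic $k$-cycle system of order $v$. Each $C+j$ is automatically a $k$-cycle (translation preserves the distinctness of the vertices $c_1,\ldots,c_k$), so the real content is that the edges of the cycles $C+j$ partition $E(K_v)$.

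The core step is a direct matching between edges of $K_v$ and contributions to $\Delta\mathcal{B}$. For each nonzero $d\in\Z_v$ the edges $\{a,a+d\}$ of $K_v$ form a single $\Z_v$-orbit $E_d$, and $E_d=E_{-d}$ as sets of edges. If some edge $\{c_h,c_{h+1}\}$ of a cycle $C\in\mathcal{B}$ satisfies $c_{h+1}-c_h=d$, then its $\Z_v$-translates sweep out precisely $E_d$, and the edge contributes both $+d$ and $-d$ to $\Delta\mathcal{B}$. The multiset equality $\Delta\mathcal{B}=\Z_v\setminus\{0\}$ therefore asserts that every nonzero $d$ is realised as $\pm(c_{h+1}-c_h)$ by a unique pair $(C,h)$. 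Reading this statement backwards, an arbitrary edge $\{a,a+d\}$ of $K_v$ sits in at least one cycle of $\mathcal{C}$ (the appropriate translate of the unique such $C$) and in at most one (a second cover would force a second occurrence of $\pm d$ in $\Delta\mathcal{B}$).

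A numerical check confirms consistency: $|\Delta\mathcal{B}|=2k|\mathcal{B}|=v-1$ yields $|\mathcal{B}|=(v-1)/(2k)$, and since each base cycle has a $\Z_v$-orbit of length $v$, the total edge count is $k\cdot v\cdot(v-1)/(2k)=v(v-1)/2=|E(K_v)|$. The one subtle point that will need care is the bookkeeping with the $\pm$ in the definition of $\Delta C$: each edge contributes two entries to $\Delta\mathcal{B}$, while each nonzero element of $\Z_v$ appears once in $\Z_v\setminus\{0\}$, so the bijection really is between edges of cycles in $\mathcal{B}$ and unordered pairs $\{+d,-d\}$. I expect pinning down this sign matching to be the only, and essentially routine, obstacle; once it is settled the partition statement falls out of the hypothesis immediately.
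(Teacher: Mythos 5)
Your argument is correct and is the standard difference-method proof; note that the paper does not actually prove Theorem~\ref{thm:basecycles} but recalls it as a known result (the classical method of differences, cf.~\cite{B}), so there is no in-paper proof to diverge from. Two small points you wave at but should pin down when writing it up: the hypothesis $\Delta\mathcal{B}=\Z_v\setminus\{0\}$ (each element with multiplicity one) forces $v$ to be odd, since an edge of difference $d=-d=v/2$ would contribute $v/2$ to the multiset with even multiplicity; and the exact-cover argument also shows each $C\in\mathcal{B}$ has trivial stabilizer under translation, so the orbits really have length $v$ and your edge count $k\cdot v\cdot|\mathcal{B}|=v(v-1)/2$ is legitimate.
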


Suppose now to have a simple Heffter system $D(v,k)$. For $1\leq i \leq \frac{v-1}{2k}$, let 
$\omega_i=(a_{i,1},a_{i,2},$ $\ldots,a_{i,k})$ 
be a simple ordering of the $i$-th part of $D(v,k)$. Let $C_i=(a_{i,1},a_{i,1}+a_{i,2},\ldots,\sum_{j=1}^{k-1}a_{i,j},\sum_{j=1}^{k}a_{i,j}=0)$.
Since the ordering $\omega_i$ is simple, then $C_i$ is a $k$-cycle of $K_v$; also it results $\Delta C_i=\pm\omega_i$.
Let $\mathcal{B}=\left\{C_1,\ldots,C_{\frac{v-1}{2k}}\right\}$, since $D(v,k)$ is a Heffter system then $\Delta\mathcal{B}=\Z_v\setminus\{0\}$. 
Hence, by Theorem \ref{thm:basecycles},  $\mathcal{B}$ is a set of base cycles of a cyclic $k$-cycle system of order $v$.

\begin{ex}
Consider the Heffter system $D(25,6)=\{\{3,1,4,-5,10,12\},\{2,7,$ $-9,6,8,11\}\}$.
Since the orderings $\omega_1=(1,3,4,-5,10,12)$ and  $\omega_2=(2,6,7,8,-9,$ $11)$ are simple, there
exists a cyclic $6$-cycle system of order $25$. The cycles associated to these orderings are
$C_1=(1,4,8,3,13,0)$ and $C_2=(2,8,15,23,14,0)$. Note that
 $\Delta C_1\cup \Delta C_2=\Z_{25}\setminus\{0\}$, so
 $\{C_1,C_2\}$ is a set of base cycles of a cyclic $6$-cycle system of order $25$, that is
$\mathcal{C}=\{C_1+i, C_2+i : i\in \Z_{25}\}$.
\end{ex}

Note that if, in the previous example, we replace the ordering $\omega_1$ with $\omega'_1=(1,4,-5,3,10,12)$,
we do not obtain a cycle, but the union of the cycles $(0,1,5)$ and $(0,3,13)$.
Hence, in order to obtain a system with cycles of the same length starting from a Heffter 
system $D$, it is necessary to require for the simplicity of $D$.
The validity of Conjecture \ref{Conj:nostra} would imply that any part of a Heffter system admits a simple ordering, namely that
any Heffter system is simple.

\section{Conjectures on graphs with prescribed edge-lengths}\label{3}

In this section we will see how the conjectures on partial sums of a given set 
presented in the Introduction are closely related to some conjectures on graphs with prescribed edge-lengths.
We recall that the {\it length} $\ell(x,y)$ of an edge $[x,y]$ of $K_v$ is so defined:
$$\ell(x,y)=min(|x-y|,v-|x-y|).$$
If $\Gamma$ is any subgraph of $K_v$, then the list of edge-lengths of $\Gamma$ is the multiset
$\ell(\Gamma)$ of the lengths (taken with their respective multiplicities) of  all the edges
of $\Gamma$.
For our convenience, if a list $L$ consists of
$m_1$ $a_1$'s, $m_2$ $a_2$'s, \ldots, $m_t$ $a_t$'s,
we will write $L=\left\{a_1^{m_1},a_2^{m_2},\ldots,a_t^{m_t}\right\}$, whose \emph{underlying set} is
 the set
$\{a_1,a_2,\ldots,a_t\}$.
Moreover, with an abuse of notation, by $\sum L$ we will mean $\sum_{a\in L} a$.

A famous conjecture about edge-lengths of a Hamiltonian path of the complete graph has been proposed by Buratti, Horak and Rosa, see \cite{HR}.

\begin{conj}\label{Conj:BHR}
Let $L$ be a list of $v-1$ positive integers not exceeding $\left\lfloor\frac{v}{2}\right\rfloor$.
Then there exists a Hamiltonian path $H$ of $K_v$ such that $\ell(H)=L$ if and only if 
for any divisor $d$ of $v$, the number of multiples of $d$ appearing in $L$ does not exceed $v-d$.
\end{conj}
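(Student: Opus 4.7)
My plan separates the two implications. The ``only if'' direction is a short counting argument; the ``if'' direction is the Buratti--Horak--Rosa conjecture itself, which remains open in general, so I can only outline a natural line of attack.

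For necessity, suppose a Hamiltonian path $H$ of $K_v$ with $\ell(H)=L$ exists and fix a divisor $d$ of $v$. Since $d\mid v$, an edge $[x,y]$ has length divisible by $d$ exactly when $x\equiv y\pmod d$ (both $|x-y|$ and $v-|x-y|$ are then simultaneously divisible by $d$). Let $H_d$ be the spanning subgraph of $H$ retaining only such edges. As a subgraph of a path, $H_d$ is a disjoint union of paths, and each component of $H_d$ lies in a single residue class modulo $d$. Each class has $v/d$ vertices, so the components contained in it carry at most $v/d-1$ edges in total. Summing over the $d$ classes gives $|E(H_d)|\le d(v/d-1)=v-d$, matching exactly the number of multiples of $d$ in $L$.

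For sufficiency, the key reformulation is in $\Z_v$: a Hamiltonian path of $K_v$ starting at $0$ with length list $L=\{\ell_1,\ldots,\ell_{v-1}\}$ corresponds to a choice of signs $\eps_i\in\{\pm 1\}$ together with an ordering of $(\eps_1\ell_1,\ldots,\eps_{v-1}\ell_{v-1})$ whose partial sums are pairwise distinct in $\Z_v$. This ties the statement directly to the partial-sum conjectures of the Introduction and suggests two strategies: (a) strong induction on $v$, in which a length of high multiplicity is removed so as to preserve every divisor bound, the inductive hypothesis is applied to a shorter problem, and a local-exchange lemma reinserts the removed entries; (b) a direct greedy or probabilistic construction of the sign-and-order data, with the divisor bounds used at each step to guarantee that some continuation exists.

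The main obstacle, and the reason the conjecture remains open in general, is that the divisor bounds can be simultaneously tight at several divisors of $v$; any inductive reduction must then preserve every one of them at once, and tracking how these bounds transform under the contraction of a length (which effectively changes $v$) is where I would expect a general argument to break down. A reasonable first milestone is the case $v$ prime, where the only nontrivial condition is $|L|=v-1$, and in which the sufficiency reduces to a clean partial-sum question over $\Z_v$ of exactly the type studied here.
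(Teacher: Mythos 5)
The statement you were asked about is Conjecture~\ref{Conj:BHR} (Buratti--Horak--Rosa); the paper does not prove it and offers no proof to compare against --- it is stated as an open conjecture, reformulated in terms of partial sums, and used as motivation. So the honest assessment is that you have correctly diagnosed the situation: you cannot prove this statement in full because nobody can.

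What you do prove --- the necessity of the divisor condition --- is correct. The argument that an edge $[x,y]$ has length divisible by $d$ exactly when $x\equiv y\pmod d$ (using $d\mid v$), that the surviving subgraph $H_d$ is a forest whose components each lie in a single residue class of size $v/d$, and that summing the forest bound over the $d$ classes gives at most $d(v/d-1)=v-d$ such edges, is the standard counting argument. It is essentially the same mechanism the paper deploys in Proposition~\ref{prop:nec} for the analogous necessary condition on cycles (there one deletes the $N$ edges of length \emph{not} divisible by $d$ and counts components); your version for paths is the correct adaptation. Your reformulation of the sufficiency direction as a signed partial-sum problem over $\Z_v$ is also exactly the restatement the paper itself records in the quoted paragraph following the conjecture.

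The gap, of course, is the entire ``if'' direction, and your two proposed strategies (induction with reinsertion of a high-multiplicity length; greedy/probabilistic construction) are only headings, not arguments --- you correctly identify that preserving all divisor bounds simultaneously under any reduction is where they stall, and that even the prime case $v=p$, where the only condition is $|L|=v-1$, is unresolved in general (partial results exist in the cited literature for underlying sets of small size). To be clear in your write-up: what you have is a proof of one implication of a conjecture, not a proof of the conjecture, and it should be labelled as such.
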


It is not hard to see that, even if the 
statement in terms of edge-lengths of a Hamiltonian path is more elegant,  this conjecture can be formulated also in terms of partial sums of a given list.
In fact Conjecture \ref{Conj:BHR} can be also stated as:
\begin{quote}
Let $v$ be a positive integer and let $L$ be a list of $v-1$ nonzero elements of $\Z_v$. Then,
there exists a suitable sequence $(\eps_1,\ldots,\eps_{v-1})$, where each $\eps_i=\pm1$, and a suitable ordering 
$(a_1,\ldots,a_{v-1})$ of $L$ such that the partial sums of the sequence $(\eps_1a_1,\ldots,\eps_{v-1}a_{v-1})$
are exactly the elements of $\Z_v\setminus\{0\}$ if and only if for any divisor $d$ of $v$ the number of multiples of $d$ appearing in $L$
does not exceed $v-d$.
\end{quote}
For example, given the list $L=\{1^2,4^2,5\}$ in $\Z_6$, we can consider the sequence $(1,4,-1,4,-5)$ whose partial sums are
$1,5,4,2,3$. 

Meszka, Pasotti and Pellegrini proposed another related conjecture, see \cite{PPgc}.

\begin{conj}\label{MPP}
Let $v=2n+1$ be an odd integer and let $L$ be a list of $n$ positive integers not
exceeding $n$.
Then there exists a near $1$-factor $F$ of $K_v$ such that $\ell(F)=L$ if and only if
for any divisor $d$ of $v$ the number of multiples of $d$ appearing in $L$ does not
exceed $\frac{v-d}{2}$.
\end{conj}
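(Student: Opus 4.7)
\emph{Necessity} is a direct counting argument. Fix a divisor $d$ of $v$ and note that an edge $\{x,y\}$ of $K_v$ has length a multiple of $d$ exactly when $x\equiv y\pmod{d}$, so its endpoints lie in a common coset of the subgroup $d\Z_v$. There are $d$ such cosets, each of odd cardinality $v/d$, and any matching inside a coset of size $v/d$ has at most $(v/d-1)/2$ edges; summing over the $d$ cosets gives the bound $d\cdot (v/d-1)/2=(v-d)/2$, which is exactly the required inequality.

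For the sufficiency direction, which is the substantive content of the conjecture, my plan is an induction on $n$, with small $n$ treated by hand or by a computer search analogous to those already performed for Conjectures~\ref{Conj:ADMS} and~\ref{Conj:nostra}. At the inductive step, I would pick a length $\ell\in L$ and an edge $\{x,x+\ell\}$ of $K_v$ in such a way that the reduced list $L\setminus\{\ell\}$ still satisfies the divisor hypothesis and so that the vertex $x+\ell$ can be arranged to be uncovered by a near $1$-factor realising $L\setminus\{\ell\}$; adjoining $\{x,x+\ell\}$ would then produce the required near $1$-factor of $K_v$. To make this reduction succeed, one has to prove by induction the stronger statement that the uncovered vertex can be prescribed arbitrarily. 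An alternative route, closer to the partial-sum theme of this paper, is to derive the near $1$-factor from a suitable Hamiltonian path $w_0,w_1,\ldots,w_{2n}$ of $K_v$: the alternate edges $\{w_{2i},w_{2i+1}\}$ form a near $1$-factor, so it would suffice to find a Hamiltonian path whose edges at odd positions have lengths exactly $L$ and whose edges at even positions form a list $M$ so that $L\cup M$ is admissible in the sense of Conjecture~\ref{Conj:BHR}. This route has the appealing feature of placing Conjecture~\ref{MPP} directly inside the partial-sum framework discussed throughout the paper.

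The main obstacle in both approaches is the extremal regime. When the inequality on the number of multiples of $d$ is tight, or nearly tight, for some proper divisor $d>1$ of $v$, the edges of lengths divisible by $d$ are essentially forced into specific cosets of $d\Z_v$, and one loses the slack needed to prescribe the uncovered vertex in the inductive step or to choose compatible auxiliary lengths in the Hamiltonian-path approach. Handling such tight configurations uniformly in $v$ and $L$ is, in my view, the genuinely hard part of a proof, and it is precisely what has kept Conjecture~\ref{MPP} open; an eventual resolution will likely have to combine a careful extremal analysis of these tight cases with one of the two reductions sketched above.
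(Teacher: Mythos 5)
This statement is a conjecture, not a theorem: the paper states it (attributing it to Meszka, Pasotti and Pellegrini, reference \cite{PPgc}) precisely because it is open, and offers no proof of it. So there is no argument of the authors to compare yours against, and any complete ``proof'' would be a new research result rather than a reconstruction.

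Your necessity argument is correct and is the standard one: for $d\mid v$ the edges of length divisible by $d$ join vertices in a common residue class modulo $d$, each of the $d$ classes has odd size $v/d$, a matching inside such a class has at most $(v/d-1)/2$ edges, and summing gives $(v-d)/2$. For sufficiency, however, what you offer is a plan, not a proof, and you say so yourself. The genuine gap is exactly where you locate it: in the inductive removal of a single length $\ell$ you would need the strengthened statement that the uncovered vertex of a near $1$-factor realising $L\setminus\{\ell\}$ can be prescribed, and neither that strengthening nor the claim that $L\setminus\{\ell\}$ can always be made to satisfy the divisor condition is established -- indeed the tight cases you flag (where the count of multiples of $d$ equals $(v-d)/2$) are precisely where both steps can fail. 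The Hamiltonian-path route has the same difficulty compounded, since it relies on Conjecture~\ref{Conj:BHR}, which is itself open. In short: the easy direction is done correctly, the hard direction is an honest but incomplete sketch, which is the appropriate state of affairs for an open conjecture; known partial results are in \cite{PPgc} and \cite{R}.
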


\noindent Results about Conjectures \ref{Conj:BHR} and \ref{MPP} can be found in \cite{CDF, DJ, HR, PPdm, PPejc} and in \cite{PPgc,R}, respectively. 

Looking at Conjecture \ref{Conj:nostra} and at these conjectures it is quite natural to ask which properties have to satisfy
a list $L$ of $k$ elements of $\left\{1,2,\ldots,\left\lfloor\frac{v}{2}\right\rfloor\right\}$ in order to have a $k$-cycle $C$ of $K_v$ such that $\ell(C)=L$.

\begin{rem}\label{rem:trivial}
Let $v$ be a positive integer and let $L=\{a_1,\ldots,a_k\}$ be a list of $k$ elements
not exceeding $\left\lfloor\frac{v}{2}\right\rfloor$. A trivial necessary condition 
for the existence of a cycle $C$ of $K_v$ such that $\ell(C)=L$ is the existence of a list 
$(\eps_1,\ldots,\eps_{k})$, where each $\eps_i=\pm1$, such that $\sum_{i=1}^{k}\eps_i a_i\equiv0\pmod v$.
\end{rem}

For example, given the list $L=\{1^2,2,3,5^2\}$ we have $1-1-2+3+5+5\equiv 0\pmod{11}$
and $C=(0,5,10,9,1,2)$ is a cycle of $K_{11}$ such that $\ell(C)=L$.
It is easy to see that this condition it is not sufficient: in Proposition \ref{prop:nec}
we will show another necessary condition. Firstly, we state the following.

\begin{lem}
Let $L=\{a_1^{m_1},a_2^{m_2},\ldots,a_t^{m_t}\}$ be a list of $k$ elements of $\{1,2,\ldots,\left\lfloor\frac{v}{2}\right\rfloor\}$
such that $d=\gcd(v,a_1,a_2,\ldots,a_t)>1$.
There exists a cycle $C$ of $K_v$ such that $\ell(C)=L$ if and only if there exists
a cycle $C'$ of $K_{\frac{v}{d}}$ such that $\ell(C')=\left\{\left(\frac{a_1}{d}\right)^{m_1},\left(\frac{a_2}{d}\right)^{m_2},\ldots,\left(\frac{a_t}{d}\right)^{m_t}\right\}$.
\end{lem}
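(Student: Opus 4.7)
The plan is to exploit the natural group-theoretic identification of the subgroup $d\Z_v\leq\Z_v$ with $\Z_{v'}$, where $v'=v/d$, via the isomorphism $jd\mapsto j$. Since by hypothesis every $a_i$ is a multiple of $d$, any cycle $C$ of $K_v$ realising $L$ is forced to live in a single coset of $d\Z_v$; contracting that coset by the factor $d$ produces a cycle of $K_{v'}$ whose list of lengths is the scaled one, and the converse direction is just the inverse translation.

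For the \emph{only if} direction, let $C=(c_1,\ldots,c_k)$ be a cycle of $K_v$ with $\ell(C)=L$, so that each consecutive difference satisfies $c_{h+1}-c_h\equiv\pm a_{j_h}\pmod v$ for a suitable index $j_h$ (subscripts modulo $k$). Because $d\mid a_{j_h}$, an immediate induction shows that all vertices of $C$ belong to the coset $c_1+d\Z_v$. Setting $c_h':=(c_h-c_1)/d\pmod{v'}$ therefore gives $k$ pairwise distinct elements of $\Z_{v'}$, and dividing the congruences above by $d$ yields $c_{h+1}'-c_h'\equiv\pm a_{j_h}/d\pmod{v'}$, so $C':=(c_1',\ldots,c_k')$ is a cycle of $K_{v'}$ realising the scaled list. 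The \emph{if} direction is the inverse construction: starting from a cycle $C'=(c_1',\ldots,c_k')$ of $K_{v'}$ with lengths $a_i/d$, lift via $c_h:=dc_h'\in\Z_v$ to obtain $k$ distinct vertices whose consecutive differences reduce to $\pm a_{j_h}\pmod v$.

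The only step that requires genuine care, and hence is the main obstacle, is the verification that $a_i/d\leq\lfloor v'/2\rfloor$, so that the scaled values really qualify as edge-lengths in $K_{v'}$ and so that the residues $\pm a_{j_h}$ actually realise the claimed lengths in $K_v$ (rather than their complements to $v$). A short case analysis on the parity of $v'$ suffices: if $v'$ is even then $a_i/d\leq v/(2d)=v'/2=\lfloor v'/2\rfloor$; if $v'$ is odd then $a_i/d<v/(2d)=v'/2$, and the integrality of $a_i/d$ against the half-integer $v'/2$ forces $a_i/d\leq(v'-1)/2=\lfloor v'/2\rfloor$. Once this numerical bound is secured, the remainder of the argument is a routine translation between $\Z_v$ and $\Z_{v'}$.
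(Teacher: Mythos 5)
Your proof is correct and follows essentially the same route as the paper's: observe that all vertices of $C$ lie in one coset of $d\Z_v$, translate and divide by $d$ to land in $\Z_{v/d}$, and invert this for the converse. The only addition is your explicit check that $a_i/d\leq\left\lfloor\frac{v/d}{2}\right\rfloor$, a detail the paper dismisses as obvious.
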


\begin{proof}
Let $C=(0,c_2,\ldots,c_k)$ be a $k$-cycle of $K_v$ such that $\ell(C)=L$. Note that since $d$ divides each element of $L$,
$d$ divides $c_i$ for any $i$. Moreover, since $C$ is a cycle, $c_i\not\equiv c_j\pmod v$ for any $i\neq j$,
so $k$ cannot exceed $\frac{v}{d}$, since every class of residues modulo $d$ intersects $\{0,\ldots,v-1\}$ in a set of that size.
From $c_i\not\equiv c_j\pmod v$ for any $i\neq j$, it immediately follows also that $\frac{c_i}{d}\not\equiv \frac{c_j}{d}\pmod{\frac{v}{d}}$ for any $i\neq j$, so
$C'=\left(0,\frac{c_2}{d},\ldots,\frac{c_k}{d}\right)$ is a $k$-cycle of $K_\frac{v}{d}$.
Obviously, $\ell(C')$ is the list of the elements of $L$ divided by $d$.
The converse can be done in a similar way. 
\end{proof}

Hence, without loss of generality, we can consider lists $\left\{a_1^{m_1},a_2^{m_2},\ldots,a_t^{m_t}\right\}$ of elements not exceeding $\left\lfloor\frac{v}{2}\right\rfloor$
such that $\gcd(v,a_1,a_2,\ldots,a_t)=1$. For example, instead of $v=20$ and $L=\{6^6,8^2\}$, we can consider $v'=10$ and
$L'=\{3^6,4^2\}$. Note that $C'=(0,4,8,5,2,9,6,3)$ is a cycle of $K_{10}$ such that $\ell(C')=L'$, so $C=(0,8,16,10,4,18,12,6)$ is a cycle of $K_{20}$ such that $\ell(C)=L$.

\begin{prop}\label{prop:nec}
Let $L=\left\{a_1^{m_1},a_2^{m_2},\ldots,a_t^{m_t}\right\}$ be a list of $k\leq v$ elements of $\{1,2,\ldots,\left\lfloor\frac{v}{2}\right\rfloor\}$
with $\gcd(v,a_1,a_2,\ldots,a_t)=1$.
If there exists a $k$-cycle $C$ of $K_v$ such that $\ell(C)=L$ then  for any divisor $d>1$ of $v$, 
 the number of multiples of $d$ appearing in $L$ does not exceed $\frac{k}{v}(v-d)$.
 \end{prop}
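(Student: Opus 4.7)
The plan is to translate the conclusion into a statement about the cyclic sequence of residue classes mod $d$ of the vertices of $C$. Since $d\mid v$, the natural projection $\pi\colon\mathbb{Z}_v\to\mathbb{Z}_v/d\mathbb{Z}_v$ is well defined, and an edge $[c_h,c_{h+1}]$ of $C$ has length a multiple of $d$ precisely when $c_h\equiv c_{h+1}\pmod{d}$. Consequently, the number $M$ of multiples of $d$ appearing in $L$ coincides with the number of edges of $C$ whose two endpoints share the same image under $\pi$, and the target inequality becomes $M\le k(v-d)/v$.

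Next I would decompose the cyclic vertex sequence of $C$ into maximal monochromatic arcs with respect to the coloring by $\pi$, calling these \emph{blocks} and letting $B$ denote their number. A direct count yields $M=k-B$ whenever $B\ge 2$: the $B$ edges joining consecutive blocks account for all class-change edges, and the remaining $k-B$ edges are intra-block. The only degenerate case is $B=1$, meaning that all vertices of $C$ lie in a single class mod $d$; but then $\pm a_j\equiv 0\pmod{d}$ for every $a_j$ appearing in $L$, and combined with $d\mid v$ this gives $d\mid\gcd(v,a_1,\dots,a_t)=1$, contradicting $d>1$. Thus the gcd hypothesis is precisely what rules out the degenerate case.

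It remains to lower-bound $B$. Each residue class of $\mathbb{Z}_v$ modulo $d$ contains exactly $v/d$ elements, so by pigeonhole the $k$ distinct vertices of $C$ must occupy at least $k/(v/d)=kd/v$ distinct classes, and each nonempty class contributes at least one block. Hence $B\ge kd/v$, and combining with $M=k-B$ gives $M\le k-kd/v=\tfrac{k}{v}(v-d)$, as required. The only mildly subtle point is the disposal of the $B=1$ case via the gcd hypothesis; beyond that, the argument is a pigeonhole plus an elementary block count on a cyclic sequence, and I do not anticipate any serious obstacle.
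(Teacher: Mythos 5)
Your proof is correct and follows essentially the same route as the paper: your maximal monochromatic blocks are exactly the connected components obtained in the paper by deleting the edges whose length is not divisible by $d$, and your pigeonhole bound $B\ge kd/v$ is the paper's inequality $k\le N\frac{v}{d}$ in disguise, with the gcd hypothesis ruling out the degenerate single-block case in both arguments. No substantive difference.
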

 
\begin{proof}
Let $C$ be a $k$-cycle of $K_v$ with $\ell(C)=L$ and let $d>1$ be a divisor of $v$.
Denote by $N$ the number of non-multiples of $d$ appearing in $L$.
By the hypothesis, $N\geq1$.
Consider the graph $\Gamma$
obtainable from $C$ by deleting all $N$ edges whose length is not divisible by $d$.
Then $\Gamma$ has exactly $N$ connected components 
some of which may be just isolated vertices.
It is also clear that all vertices of every connected component $K$ of $\Gamma$
are in the same residue class modulo $d$ so that $K$ has
at most $\frac{v}{d}$ vertices.
It follows that 
$k=|V(\Gamma)|\leq N\frac{v}{d}$, i.e. $N\geq \frac{k\cdot d}{v}$. Hence the number of multiples of $d$, namely $k-N$,
is at most $k-\frac{k\cdot d}{v}$, so we have the thesis.
\end{proof}

We have to point out that the necessary conditions of Remark \ref{rem:trivial} and Proposition \ref{prop:nec} are not sufficient. 
In fact if we take $v=8$ and we consider the list $L=\{3^4,4^4\}$, one can check that  a cycle $C$ of $K_{8}$ such that $\ell(C)=L$ does not exist.
Note that if $v$ is a prime, Proposition \ref{prop:nec} gives no necessary condition. Although, 
for example, there exists no cycle $C$ of $K_7$ such that $\ell(C)=\{1,2,3^5\}$. 
A special case is considered in the following remark.

\begin{rem}
Let $L=\{a^k\}$ with $1\leq a\leq \left\lfloor\frac{v}{2}\right\rfloor$. Then $(0,a,2a,\ldots,(k-1)a)$
is a cycle of $K_v$ if and only if the order of $a$ in $\Z_v$ is $k$, namely if $k=\frac{v}{\gcd(v,a)}$.
\end{rem}

So we propose the following.

\begin{prob}
Let $v$ be a positive integer and let $L$ be a list of positive integers not exceeding $\left\lfloor\frac{v}{2}\right\rfloor$. 
Find the necessary and sufficient conditions for $L$ in order to have a cycle $C$ of $K_v$
such that $\ell(C)=L$.
\end{prob}

\section{Proof of main results}\label{4}

In this section we prove that Conjecture \ref{Conj:nostra} holds for sets of small size.
Clearly, given an ordering $(a_1,a_2,\ldots,a_k)$ of a finite set $A\subseteq G\setminus\{0\}$,
for $1\leq i<j \leq k$, we have $s_i=s_j$ if and only if $\sum_{h=i+1}^{j}a_h=0$.
In the proof of the following theorems, we will show that an ordering $\omega$ of $A$ is simple, by checking that
there is no subsequence of consecutive elements of $\omega$, viewed as a $k$-cycle, which sums to $0$.
We recall that by $\sum A$ we mean $\sum_{a\in A} a$.
Clearly, if there exists $B \subsetneq A$ such that $\sum B=0$, then we may assume that $|B|\geq 3$,
as we are requiring that $A$ does not contain $2$-subsets of shape $\{x,-x\}$.
Furthermore, if $\sum B=0$ then $\sum (A\setminus B)=0$ (as $\sum A=0$) and so we may also assume $|B|\leq \frac{|A|}{2}$.

\begin{thm}\label{thm:8}
Conjecture \ref{Conj:nostra} is true for $|A|\leq 8$.
\end{thm}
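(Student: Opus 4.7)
The plan is to argue by cases on $k=|A|$, with the key simplification noted right before the theorem: it suffices to avoid consecutive zero-sums of length $\ell$ with $3 \leq \ell \leq k/2$ (since $\ell=1$ is ruled out by $0 \notin A$, $\ell=2$ by the no $\{x,-x\}$ hypothesis, and lengths $\ell > k/2$ are complementary to shorter ones because $\sum A = 0$).

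For $k \leq 5$ the theorem is immediate: a bad subset $B \subsetneq A$ with $\sum B = 0$ would need $3 \leq |B| \leq \lfloor k/2 \rfloor \leq 2$, an empty range. Hence $\mathcal{B} = \emptyset$ and every ordering of $A$ is simple. For $k \in \{6, 7\}$ the only bad subsets have size $3$. The cornerstone of this case is a short \emph{disjointness lemma}: when $k = 6$, any two bad $3$-subsets of $A$ are disjoint. Indeed, if $\{a,b,c\}$ and $\{a,d,e\}$ were both bad, then $b+c = d+e = -a$, and combining with $\sum A = 0$ forces the sixth element of $A$ to equal $a$, a contradiction. Thus for $k = 6$ either $\mathcal{B} = \emptyset$ (any ordering works) or $\mathcal{B}$ partitions $A$ into two complementary bad triples $B = \{x_1, x_2, x_3\}$ and $A \setminus B = \{y_1, y_2, y_3\}$; the alternating ordering $(x_1, y_1, x_2, y_2, x_3, y_3)$ is then simple, because by disjointness none of the six cyclic triples of consecutive entries equals $B$ or $A \setminus B$. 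The case $k = 7$ follows the same philosophy with a slightly weaker structural statement (bad triples need no longer be disjoint but are still severely restricted) and an explicit interleaving in which the "extra" element of $A \setminus B$ is placed in a distinguished slot.

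For $k = 8$ one must additionally avoid $4$-term consecutive zero-sums. The argument splits on whether $\mathcal{B}$ contains a bad $3$-subset. If $B = \{b_1, b_2, b_3\}$ is a bad triple, then $A \setminus B$ is a $5$-element set whose own bad $3$-subsets obey the structure already established for the $k \leq 7$ cases; an ordering of the shape $(b_1, y_1, b_2, y_2, b_3, y_3, y_4, y_5)$ is constructed and its partial sums checked directly. If $\mathcal{B}$ consists only of bad $4$-subsets, a structure lemma analogous to the $k = 6$ disjointness statement (but necessarily weaker, since two bad $4$-subsets may meet in a pair) constrains the configurations to a small list, each resolved by an explicit ordering that alternates along a chosen bad $4$-partition.

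The main obstacle is the $k = 8$ analysis, and specifically the subcase in which $\mathcal{B}$ contains several bad $4$-subsets with non-trivial intersections: unlike the $3$-subsets encountered for $k = 6$, these $4$-subsets can genuinely overlap, so the clean alternating construction does not apply verbatim, and a handful of overlap patterns have to be treated individually. None of the individual verifications is difficult, but marshalling them uniformly is what makes the proof lengthy.
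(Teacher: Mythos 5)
Your overall framework is the same as the paper's: reduce simplicity to the absence of consecutive zero-sum runs of length $\ell$ with $3\le\ell\le k/2$, then do a case analysis on the structure of the zero-sum subsets and exhibit explicit orderings. Your treatment of $k\le 6$ is complete and correct; the disjointness observation for $k=6$ (two zero-sum triples sharing one element would force a repeated element of $A$) is a clean statement of what the paper uses implicitly, and the alternating ordering does work since no consecutive triple of it is monochromatic.

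The problem is that for $k=7$ and $k=8$ you have written a plan, not a proof. The entire content of the theorem at these sizes is the explicit case analysis, and you never carry it out: you assert that an interleaving "is constructed and its partial sums checked directly," that a structure lemma "constrains the configurations to a small list," and that "a handful of overlap patterns have to be treated individually" --- but no ordering is exhibited, no lemma is stated or proved, and no pattern is treated. Moreover, the one template you do propose for $k=8$, namely $(b_1,y_1,b_2,y_2,b_3,y_3,y_4,y_5)$, does not work unconditionally: it contains the consecutive triple $\{y_3,y_4,y_5\}$, which may itself be a zero-sum triple of $A\setminus B$, and the consecutive $4$-sets $\{b_1,y_1,b_2,y_2\}$, $\{y_2,b_3,y_3,y_4\}$, $\{b_3,y_3,y_4,y_5\}$, etc., any of which may be a zero-sum $4$-subset. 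So the choice of labelling and the sub-case splits genuinely matter, and they are exactly what is missing. The paper resolves this by fixing normal forms for the intersecting zero-sum $3$- and $4$-subsets ($|Q_i\cap Q_j|\in\{1,2\}$, $|Q\cap T|\in\{1,2\}$, $|T_i\cap T_j|=1$) and then listing, for each configuration, two or three candidate orderings distinguished by whether one further specific sum vanishes; until you supply the analogous lists and verifications, the cases $k=7$ and $k=8$ remain unproved.
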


\begin{proof}
Clearly, if there are no proper subsets $B$ of $A$ such that $\sum B=0$ we have the thesis.
So, suppose that there exists a subset $B \subsetneq A$ such that $\sum B=0$. As previously remarked, we may assume $3\leq |B|\leq \frac{|A|}{2}$.

If $|A|\leq 5$ the thesis immediately follows by above considerations.

Assume that $A=\{a_1,a_2,\ldots, a_6\}$ has order $6$ and, without loss of generality, we may suppose that $B=\{a_1,a_2,a_3\}$.
It is now clear that the ordering $(a_1,a_2,a_4,a_3,a_5,a_6)$ is simple, since $A\setminus B$ is the only other proper subset of $A$ that sums to $0$.

Now, assume that $A=\{a_1,a_2,\ldots,a_7\}$ is a set of size $7$ and let $T_1\subsetneq A$ such that $\sum T_1=0$. We can assume, without loss of generality, $T_1=\{a_1,a_2,a_3\}$.
If $T_1$ is the unique subset of $A$  of size $3$ such that $\sum T_1=0$, then the ordering $(a_1,a_2,a_4,a_3,a_5,a_6,a_7)$ is simple.
Suppose that there is another subset $T_2\subsetneq A$ of size $3$ such that $\sum T_2=0$.
It is easy to see that $|T_1\cap T_2|=1$. Without loss of generality, we may assume $T_2=\{a_3,a_4,a_5\}$. 
The ordering 
$$\omega=\left\{\begin{array}{ll}
(a_1,a_2,a_4,a_3,a_6,a_5,a_7) & \textrm { if }  a_1+a_5+a_7\neq 0,\\
(a_1,a_4,a_2,a_3,a_5,a_6,a_7) & \textrm { if }  a_1+a_5+a_7 = 0
\end{array} \right.$$
is simple.

Finally, we suppose that $A=\{a_1,a_2,\ldots,a_8\}$ is a set of size $8$. We split the proof into two cases.\\[2pt]
\underline{Case 1.} Assume that there exists a subset $Q\subsetneq A$ such that $|Q|=4$ and $\sum Q=0$.\\
First suppose that there exists a subset $T\subsetneq A$ such that $|T|=3$ and $\sum T=0$.
Clearly, $|Q\cap T|=1,2$ and replacing $Q$ with $A\setminus Q$ if $|Q\cap T|=1$, we may assume that
$Q=\{a_1,a_2,a_3,a_4\}$ and $T=\{a_3,a_4,a_5\}$. In this case the ordering
$$\omega=\left\{\begin{array}{ll}
(a_1,a_2,a_3,a_6,a_4,a_5,a_7,a_8) & \textrm { if } a_2+a_3+a_6\neq 0,\\
(a_1,a_2,a_3,a_7,a_4,a_5,a_6,a_8) & \textrm { if }  a_2+a_3+a_6 = 0
\end{array} \right.$$
is simple.
Hence,  we may assume that there are no subsets $T\subsetneq A$ of size $3$ such that $\sum T=0$.
In this case the ordering
$$\omega=\left\{\begin{array}{ll}
(a_1,a_2,a_3,a_5,a_4,a_6,a_7,a_8) & \textrm { if }  a_3+a_4+a_5+a_6\neq 0,\\
(a_1,a_2,a_3,a_5,a_4,a_7,a_6,a_8) & \textrm { if }  a_3+a_4+a_5+a_6 = 0
\end{array} \right.$$
is simple.\\[2pt]
\underline{Case 2.} Assume that there are no subsets $Q\subsetneq A$ such that $|Q|=4$ and $\sum Q=0$.\\
Hence, there is a subset $T_1\subsetneq A$ such that $|T_1|=3$ and $\sum T_1=0$: we may assume $T_1=\{a_1,a_2,a_3\}$. 
If there exists another subset $T_2\subsetneq A$ with $\sum T_2=0$, by our assumptions it must be that $|T_2|=3$. As in case $|A|=7$ we have $|T_1\cap T_2|=1$ and so we may assume $T_2=\{a_3,a_4,a_5\}$.
In this case the ordering
$$\omega=\left\{\begin{array}{ll}
(a_1,a_4,a_2,a_3,a_5,a_6,a_7,a_8) & \textrm { if } a_1+a_4+a_8\neq 0,\\
(a_1,a_4,a_2,a_3,a_5,a_6,a_8,a_7) & \textrm { if } a_1+a_4+a_8 = 0
\end{array} \right.$$
is simple.
Finally, suppose that $T_1$ is the unique subset of $A$ that sums to $0$.
In this case, the ordering $(a_1,a_2,a_4,a_3,a_5,a_6,a_7,a_8)$ is obviously simple.
\end{proof}

\begin{thm}\label{thm:9}
Conjecture \ref{Conj:nostra} is true for $|A|=9$.
\end{thm}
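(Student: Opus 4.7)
The plan is to extend the case-analytic approach of Theorem \ref{thm:8} to the nine-element case. Call a proper subset $B\subsetneq A$ \emph{bad} if $\sum B=0$; since $A$ contains no pair $\{x,-x\}$, any bad subset has $|B|\geq 3$, and complementation (using $\sum A=0$) reduces the task to enumerating bad subsets of sizes $3$ and $4$. A simple ordering of $A$ corresponds to a cyclic arrangement in which no proper contiguous arc sums to $0$, so the goal in each sub-case is to exhibit such a cyclic arrangement.

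The first step is to record the intersection constraints on bad subsets. Two distinct bad $3$-subsets $T_1,T_2$ must satisfy $|T_1\cap T_2|\in\{0,1\}$ (intersection of size $2$ would duplicate an element), and a disjoint pair forces a third bad $3$-subset $A\setminus(T_1\cup T_2)$ partitioning $A$ together with $T_1,T_2$. Two distinct bad $4$-subsets $Q_1,Q_2$ must satisfy $|Q_1\cap Q_2|\in\{1,2\}$, because $|Q_1\cap Q_2|=0$ would force the unique remaining element of $A$ to equal $0$, while $|Q_1\cap Q_2|\geq 3$ contradicts distinctness. A bad $3$-subset $T$ and a bad $4$-subset $Q$ must likewise meet in $1$ or $2$ elements, since $|T\cap Q|=0$ would yield a bad $2$-subset in the complement and $|T\cap Q|=3$ would put $0$ in $A$.

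Then I would split into two main cases. \emph{Case A:} no bad $4$-subset exists. The bad structure is then carried by bad $3$-subsets alone: either none, exactly one, a pair sharing one common element, or a partition of $A$ into three bad $3$-subsets (possibly augmented by transversal bad $3$-subsets). For each configuration I would exhibit an ordering along the lines of Theorem \ref{thm:8}, namely a base permutation of $(a_1,\ldots,a_9)$ augmented by at most one conditional swap of two adjacent entries, the swap being triggered by a single three-term sum identity. \emph{Case B:} at least one bad $4$-subset $Q$ exists, with complement $R=A\setminus Q$. The sub-cases enumerate how many further bad $4$-subsets exist and how they intersect $Q$ (as dictated by the intersection lemma), together with whether bad $3$-subsets coexist and how they meet $Q$. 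In each configuration I would propose an ordering that interleaves $Q$ with $R$ so that no bad subset appears as a contiguous cyclic arc, adjoining one or two conditional swaps to break any accidental zero-sum arc that could still arise for special values.

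The main obstacle will be the sheer volume of sub-cases: a longer ordering admits more contiguous arcs to check, and bad $3$-subsets and $4$-subsets can coexist under several intersection patterns. The delicate step in each sub-case is to design conditional swaps whose activation conditions are mutually exclusive and whose triggered versions still avoid every potential bad arc. Concretely, I expect each sub-case to conclude, as in Theorem \ref{thm:8}, with one fixed ordering plus at most one or two alternative forms conditioned on identities of the type $a_p+a_q+a_r=0$ or $a_p+a_q+a_r+a_s=0$ whose truth would spoil the base ordering; the art lies in verifying that these switches collectively cover every way the base ordering can fail.
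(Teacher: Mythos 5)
Your setup coincides exactly with the paper's: the reduction to zero\hyphenation{zero-sum}-sum subsets $B$ with $3\leq|B|\leq 4$, the intersection constraints ($|T_i\cap T_j|\in\{0,1\}$ with disjoint triples forcing a third, $|Q_i\cap Q_j|\in\{1,2\}$, $|Q_i\cap T_j|\in\{1,2\}$), the cyclic-arc reformulation of simplicity, and the top-level split according to whether a zero-sum quadruple exists. The problem is that your write-up stops precisely where the proof begins. Every branch ends with ``I would exhibit an ordering'' or ``I would propose an ordering that interleaves $Q$ with $R$''; no ordering is actually written down and no arc is actually checked. For this theorem the construction and verification \emph{are} the content: the argument requires, in each of roughly a dozen configurations, an explicit base ordering together with a branching on auxiliary zero-sum conditions, and in the harder configurations this means introducing up to seven auxiliary triples and quadruples, producing three or four alternative orderings in a single sub-case, and (in the configuration of three zero-sum quadruples pairwise meeting in one element, which covers all of $A$) invoking a cyclic index shift $a_i\mapsto a_{i+3}$ to cut down the branching. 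Your prediction that ``one fixed ordering plus at most one or two alternative forms'' per sub-case will suffice is not borne out, and your closing admission that ``the art lies in verifying that these switches collectively cover every way the base ordering can fail'' is an acknowledgment that the verification has not been performed.

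Two further concrete gaps. First, your Case B enumeration (``how many further bad $4$-subsets exist and how they intersect $Q$'') is not pinned down; the intersection lemma alone does not tell you which combinations of $|Q_i\cap Q_j|=1$ and $|Q_i\cap Q_j|=2$ can co-occur, and the proof must treat the three genuinely distinct configurations (a unique quadruple; two quadruples meeting in two elements; several quadruples all pairwise meeting in one element) separately, in each case also tracking which zero-sum triples can coexist with them. Second, in Case A your list of triple configurations must account for the possibility that, beyond the two triples you fix, \emph{additional} zero-sum triples exist with various intersection patterns against both; a chosen ordering must be immune to all of them simultaneously, which is exactly why the conditional swaps cannot be designed one bad subset at a time. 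Until the orderings are exhibited and every cyclic arc of length $3$ and $4$ of each one is checked against the standing hypotheses of its sub-case, this remains a correct plan rather than a proof.
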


\begin{proof}
Clearly, if there are no proper subsets $B$ of $A$ such that $\sum B=0$ we have the thesis.
Hence in the following we will assume that there exists $B \subsetneq A$ such that $\sum B=0$.
As already remarked, we may assume   $3\leq |B|\leq 4$.\\[2pt] 
In the proof by $Q_i$ and $T_i$ we always mean a subset of $A$ of size $4$ and $3$, respectively.
Also by $Q_i$, $Q_j$ with $i\neq j$, we will mean $Q_i\neq Q_j$; analogously for $T_i$, $T_j$.
Let $Q_i, Q_j$ be such that $\sum Q_i=\sum Q_j=0$. Note that $|Q_i\cap Q_j|\neq 0$ otherwise $0\in A$,
also $|Q_i\cap Q_j|\neq3$ otherwise $Q_i=Q_j$. Hence $\sum Q_i=\sum Q_j=0$ implies
$|Q_i\cap Q_j|=1,2$.
Let $Q_i, T_j$ be such that $\sum Q_i=\sum T_j=0$. Note that $|Q_i\cap T_j|\neq 0$, otherwise $A$ contains a $2$-subset $\{x,-x\}$; also $|Q_i \cap T_j|\neq3$ otherwise $0 \in A$. Hence $\sum Q_i=\sum T_j=0$ implies $|Q_i\cap T_j|=1,2$.
Let $T_i, T_j$ be such that $\sum T_i=\sum T_j=0$. Note that $|T_i \cap T_j|\neq 2$, otherwise $T_i=T_j$,
hence $|T_i\cap T_j|=0,1$. Also if $\sum T_i=\sum T_j=0$ and $T_i\cap T_j=\emptyset$, then the triple $A\setminus(T_i \cup T_j)$ sums to $0$.\\[2pt]
We set $A=\left\{a_1,a_2,\ldots,a_9\right\}$. Firstly, we suppose that there is no $Q_i$ which sums to $0$.
Let $T_1=\left\{a_1,a_2,a_3\right\}$ such that $\sum T_1=0$.  If there is no other triple in $A$ which sum to $0$,
the ordering $(a_1,a_2,a_4,a_3,a_5,a_6,a_7,a_8,a_9)$
is simple. \\
\underline{Case a.} Assume there exists $T_2$ with $\sum T_2=0$ and $|T_1 \cap T_2|=0$. We can suppose $T_2=\left\{a_4,a_5,a_6\right\}$:
the ordering
$$\omega=\left\{\begin{array}{ll}
(a_1,a_2,a_4,a_3,a_5,a_7,a_6,a_8,a_9) & \textrm { if } a_3+a_5+a_7\neq 0,\\
(a_1,a_2,a_4,a_3,a_6,a_7,a_5,a_8,a_9) & \textrm { if } a_3+a_5+a_7 = 0
\end{array} \right.$$
is simple.\\[2pt]
\underline{Case b.} We now assume that for all $T_i\neq T_1$ which sums to $0$,  $|T_1 \cap T_i|=1$. Let $T_2=\left\{a_3,a_4,a_5\right\}$ with $\sum T_2=0$.
In this case the ordering
$$\omega=\left\{\begin{array}{ll}
(a_1,a_2,a_4,a_3,a_6,a_5,a_7,a_8,a_9) & \textrm { if } a_1+a_8+a_9\neq 0,\\
(a_1,a_2,a_4,a_3,a_6,a_5,a_8,a_7,a_9) & \textrm { if } a_1+a_8+a_9 = 0
\end{array} \right.$$
is simple.\\[2pt]
Suppose now that there exists $Q_i$ which sums to $0$. We split the proof into 3 cases.
\begin{itemize}
\item[1)] There exists only one $Q_1$ which sums to $0$.
\item[2)] There exist $Q_1$ and $Q_2$ which sum to $0$ such that $|Q_1\cap Q_2|=2$,
\item[3)] There exist at least two quadruples in $A$ which sum to $0$. For all such quadruples $Q_i\neq Q_j$, we have $|Q_i\cap Q_j|=1$.
\end{itemize}
\underline{Case 1.} We split this case into 3 subcases.
\begin{itemize}
\item[1.1)] There is no $T_1$ which sums to $0$.
\item[1.2)] There exists $T_1$ which sum to $0$ such that $|Q_1\cap T_1|=1$,
\item[1.3)] Otherwise.
\end{itemize}
\underline{Case 1.1.} We can suppose $Q_1=\{a_1,a_2,a_3,a_4\}$. Then $(a_1,a_2,a_3,a_5,a_4,a_6,a_7,a_8,$ $a_9)$ is a simple ordering of $A$.\\[2pt]
\underline{Case 1.2.} We can suppose $Q_1=\{a_1,a_2,a_3,a_4\}$ and $T_1=\{a_4,a_5,a_6\}$. By the hypothesis, $\sum Q_1=\sum T_1=0$.
Let $T_2=\{a_1,a_2,a_9\}$ and $T_3=\{a_1,a_8,a_9\}$. 
If $\sum T_2=0$ set $\omega=(a_1,a_2,a_5,a_4,a_7,a_6,a_8,a_9,a_3)$ 
and if $\sum T_3=0$ set $\omega=(a_1,a_2,a_3,a_5,a_4, a_8,a_6,a_7,a_9)$. In both cases, $\omega$ is a simple ordering of $A$.
So, assume that $\sum T_2,\sum T_3\neq 0$. In this case, 
$$\omega=\left\{\begin{array}{ll}
(a_1,a_2,a_3,a_5,a_4,a_7,a_6,a_8,a_9) & \textrm { if } a_2+a_3+a_5\neq 0,\\
(a_1,a_2,a_3,a_6,a_4,a_7,a_5,a_8,a_9) & \textrm { if }  a_2+a_3+a_5= 0
\end{array} \right.$$
is a simple ordering of $A$.\\[2pt]
\underline{Case 1.3.} We can suppose $Q_1=\{a_1,a_2,a_3,a_4\}$ and $T_1=\{a_3,a_4,a_5\}$. 
We note that
$$\omega=\left\{\begin{array}{ll}
(a_1,a_5,a_3,a_2,a_4,a_6,a_7,a_8,a_9) & \textrm { if } a_2+a_4+a_6\neq 0,\\
(a_1,a_5,a_3,a_2,a_4,a_7,a_6,a_8,a_9) & \textrm { if } a_2+a_4+a_6 = 0
\end{array} \right.$$
is a simple ordering of $A$.\\[2pt]
\underline{Case 2.} We can suppose $Q_1=\{a_1,a_2,a_3,a_4\}$ and $Q_2=\{a_3,a_4,a_5,a_6\}$. We recall that, by the hypothesis, $\sum Q_1=\sum Q_2=0$.
Let $T_1=\{a_2,a_4,a_7\}$, $T_2=\{a_3,a_4,a_7\}$ and $Q_3=\{a_1,a_6,a_8, a_9\}$.
If $\sum T_1=0$, set $\omega=(a_1,a_2,a_3,a_7,a_4,a_5,a_6,a_8, a_9)$; if $\sum T_2=0$ set 
$T_3=\{a_1,a_3,a_5\}$, $Q_4=\{a_4,a_6,a_7,a_8\}$  and 
$$\omega=\left\{\begin{array}{lll}
(a_1,a_3,a_5,a_4,a_7,a_6,a_8,a_9,a_2) & \textrm { if }\sum T_3\neq 0  \textrm{ and } \sum Q_4 \neq 0,\\
(a_1,a_3,a_5,a_4,a_7,a_6,a_9,a_8,a_2) & \textrm { if } \sum T_3\neq 0  \textrm{ and }\sum Q_4=0,\\
(a_1,a_2,a_3,a_5,a_4,a_7,a_6,a_8,a_9) & \textrm { if } \sum T_3=0;
\end{array} \right.$$
if $\sum Q_3=0$, set
$$\omega=\left\{\begin{array}{lll}
(a_1,a_3,a_7,a_4,a_5,a_6,a_8,a_9,a_2) & \textrm { if } a_1+a_3+a_7\neq 0,\\
(a_1,a_4,a_7,a_3,a_5,a_6,a_8,a_9,a_2) & \textrm { if } a_1+a_3+a_7=0.
\end{array} \right.$$
In all cases, $\omega$ is a simple ordering of $A$. 
So, assume $\sum T_1,\sum T_2,\sum Q_3\neq 0$. Set $T_4=\{a_3,a_5,a_7 \}$, $T_5=\{ a_1, a_6, a_9\}$; then 
$$\omega=\left\{\begin{array}{lll}
(a_1,a_2,a_4,a_7,a_3,a_5,a_6,a_8,a_9) & \textrm { if } \sum T_4 \neq 0,\\
(a_1,a_2,a_7,a_4,a_3,a_5,a_8,a_6,a_9) & \textrm { if }\sum T_4=0  \textrm{ and } \sum T_5 \neq 0,\\
(a_1,a_2,a_7,a_4,a_3,a_5,a_9,a_6,a_8) & \textrm { if } \sum T_4= \sum T_5 = 0.\\
\end{array} \right.$$
is a simple ordering of $A$.\\[2pt]
\noindent\underline{Case 3.} By the hypothesis, $\sum Q_1=\sum Q_2=0$ and $|Q_1\cap Q_2|=1$. Let $Q_1=\{a_1,a_2,a_3,a_4\}$ and 
$Q_2=\{a_4,a_5,a_6,a_7\}$.
Suppose firstly that does not exist $Q_3$ which sums to $0$.
Let $T_1=\{a_4,a_6,a_8\}$,  $T_2=\{a_3,a_4,a_5\}$, $T_3=\{a_2,a_3,a_5\}$, $T_4=\{a_1,a_3,a_5\}$ and $T_5=\{a_1,a_7,a_9\}$.

Assume $\sum T_1=0$. 
If either $\sum T_2=0$ or $\sum T_3=0$ take  
$\omega=(a_2,a_1,a_4, a_6,a_3, a_5,$ $a_8,a_7,a_9)$; if $\sum T_2,\sum T_3\neq 0$, take
$T_6=\{a_1,a_7,a_8\}$ and
$$\omega=\left\{\begin{array}{ll}
(a_2,a_1,a_4, a_5,a_3,a_7, a_9,a_6,a_8) & \textrm { if } \sum T_4=0,\\
(a_2,a_1,a_3, a_5,a_4,a_6, a_9,a_7,a_8) & \textrm { if } \sum T_4\neq 0  \textrm{ and } \sum T_6=0,\\
(a_1,a_2,a_3, a_5,a_4,a_6, a_9,a_7,a_8) & \textrm { if } \sum T_6\neq 0.
\end{array} \right.$$
In all these cases, $\omega$ is a simple ordering of $A$.

So, from now on, assume $\sum T_1\neq 0$.
If $\sum T_2=0$, the ordering
$$\omega=\left\{\begin{array}{ll}
(a_1,a_3,a_2, a_5,a_4,a_6, a_8,a_7,a_9) & \textrm { if } \sum T_5 \neq 0,\\
(a_2,a_3,a_1, a_5,a_4,a_6, a_8,a_7,a_9) & \textrm { if } \sum T_5=0
\end{array} \right.$$
is simple, so we may also suppose $\sum T_2\neq 0$.
To  conclude, it suffices to take $T_7=\{a_2,a_7,a_9\}$ and
$$\omega=\left\{\begin{array}{ll}
(a_1,a_2,a_7, a_3,a_5,a_4, a_6,a_8,a_9) & \textrm { if } \sum T_3 =\sum T_7=0,\\
(a_2,a_1,a_3, a_5,a_4,a_6, a_8,a_7,a_9) & \textrm { if  either } \sum T_3=0 \textrm{ and  } \sum T_7\neq 0 \\
 & \textrm{ or } \sum T_5=0 \textrm{ and }  \sum T_4\neq 0,\\
(a_3,a_2,a_1, a_5,a_4,a_6, a_8,a_7,a_9) & \textrm { if } \sum T_5=\sum T_4=0,\\
(a_1,a_2,a_3, a_5,a_4,a_6, a_8,a_7,a_9)& \textrm { if } \sum T_3, \sum T_5\neq 0.\\[2pt]
\end{array} \right.$$
Now, we have to suppose that there exists $Q_3$ which sums to $0$. Clearly, this implies that there is no $Q_4$ such that $\sum Q_4=0$ and $|Q_4\cap Q_i|=1$ for all $i=1,2,3$.
By the assumptions it follows that $|Q_1\cap Q_2|=|Q_1\cap Q_3|=|Q_2\cap Q_3|=1$. So
 we can suppose $Q_1=\{a_1,a_2,a_3,a_4\}$, $Q_2=\{a_4,a_5,a_6,a_7\}$ and $Q_3=\{a_1,a_7,a_8,a_9\}$.
We set $T_1=\{a_1,a_3,a_5\}$, $T_2=\{a_4,a_6,a_8\}$ and $T_3=\{a_2,a_7,a_9\}$.
If $\sum T_1, \sum T_2,\sum T_3 \neq0$ then $(a_2,a_1,a_3,a_5,a_4,a_6,a_8,a_7,a_9)$ works.
We can focus our attention on the case $\sum T_1=0$, namely the case  $\sum T_2=0$  ($\sum T_3=0$, respectively) can 
be done in a similar way replacing each $a_i$ with $a_{i+3}$ (with $a_{i+6}$, respectively) where the subscripts are 
taken modulo $9$. So in the following we assume $\sum T_1=0$ and set $T_4=\{a_4,a_5,a_8\}$.
If $\sum T_3=0$, take $\omega=(a_2,a_1,a_4, a_5,a_3,a_7, a_9,a_6,a_8)$
and if $\sum T_4=0$ take
$$\omega=\left\{\begin{array}{ll}
(a_3,a_1,a_6, a_5,a_4,a_2, a_9,a_7,a_8)& \textrm { if } a_3+a_7+a_8 \neq 0,\\
(a_2,a_1,a_3, a_6,a_4,a_5, a_9,a_7,a_8)& \textrm { if } a_3+a_7+a_8=0.
\end{array} \right.$$
In all cases $\omega$ is a simple ordering of $A$.
So, assume $\sum T_3,\sum T_4\neq 0$. In this case,
$(a_2,a_1,a_3,a_6,a_4,a_5,a_8,a_7,a_9)$ is a simple ordering of $A$.
\end{proof}

\section{Further developments}\label{5}

Looking at Conjectures \ref{Conj:als} and \ref{Conj:ADMS} presented in the Introduction, a natural question is to ask what happens if one considers finite subsets $A\subseteq G\setminus\{0\}$, where $G$ is any group, not necessarily cyclic.
For instance we verified, with the help of a computer, the validity of Conjecture \ref{Conj:als}, 
for \emph{all abelian groups} of order $|G|\leq 21$.
On the other hand, Conjecture \ref{Conj:als} cannot be generalized to nonabelian groups. For instance, consider the symmetric group $G=\mathrm{Sym}(3)$ and its subset $A=G\setminus\{0\}$ (we keep using the additive notation): any ordering of $A$ is such that  $\sum A\neq 0$, but there is no ordering of $A$ such that all the partial sums are distinct and nonzero.

We have already remarked in the Introduction what happens for Conjecture \ref{Conj:ADMS} if $G$ is an abelian group. 
Note that if we consider Conjectures \ref{Conj:als} and \ref{Conj:ADMS} in the case of abelian groups then, again,
Conjecture \ref{Conj:als} implies Conjecture \ref{Conj:ADMS}: in fact it suffices to apply the same proof of \cite[Proposition 1.1]{ADMS}.

For Conjecture \ref{Conj:ADMS} it is also natural to investigate the nonabelian case.
We made a computer verification of this conjecture for \emph{all groups} of order $|G|\leq 19$.
We have also the following theoretical result.

\begin{thm}\label{A5}
Let $(G,+)$ be a group and let $A\subseteq G\setminus\{0\}$ with $|A|\leq 5$.
Then there exists an ordering of the elements of $A$ such that the partial sums are all
distinct.
\end{thm}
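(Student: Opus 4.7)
The plan is to argue by direct case analysis on $k=|A|\in\{1,2,3,4,5\}$, in the spirit of Theorems \ref{thm:8} and \ref{thm:9} above but with two differences: there is no global constraint $\sum A=0$, and the group $G$ need not be abelian.

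For $k\le 3$ the argument is immediate. If $k\le 2$, any ordering is simple because all elements of $A$ are nonzero. For $k=3$ with $A=\{a,b,c\}$, an ordering $(b_1,b_2,b_3)$ has distinct partial sums if and only if $b_2+b_3\neq 0$, and such a pair exists in $A$: otherwise $a+b=a+c=b+c=0$ would give $b=-a=c$, contradicting $|A|=3$.

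For $k=4,5$ I would translate the desired property into the condition that no contiguous subblock of length at least $2$ of the chosen ordering sums to $0$, and then enumerate cases according to which such subblocks can vanish. Two observations make the passage from the abelian case comfortable: (a) for a $2$-block, $x+y=0$ is equivalent to $y+x=0$, which is equivalent to $y=-x$, so the forbidden $2$-blocks form an unordered matching on $A$; (b) for $3$-blocks and $4$-blocks the sum depends on the ordering, which in fact buys extra freedom to break obstructions by an elementary transposition. Following this, I would list the possible patterns of inverse pairs $\{x,-x\}\subseteq A$ and of ordered triples (and for $k=5$ also ordered quadruples) of distinct elements of $A$ that sum to $0$, and for each pattern exhibit an explicit simple ordering, precisely in the style of the orderings displayed inside the proofs of Theorems \ref{thm:8} and \ref{thm:9}.

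The main obstacle is bookkeeping rather than ideas: because of non-commutativity, the forbidden configurations are ordered tuples rather than unordered subsets, so the case list is longer than in the abelian version. When $\langle A\rangle$ happens to be abelian we are back to the argument of \cite{ADMS} (which, as noted in the introduction, already works in an arbitrary abelian group); when $\langle A\rangle$ is genuinely nonabelian, the order-dependence of $3$- and $4$-block sums provides enough additional freedom to produce a working ordering after a small number of transpositions.
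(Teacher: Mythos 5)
Your overall strategy is the same as the paper's: reduce simplicity to the statement that no contiguous block of length at least two in the chosen ordering sums to $0$ (which is valid in any group by left cancellation), classify the possible obstructions (inverse pairs, vanishing $3$- and $4$-blocks), and exhibit an explicit ordering in each configuration. Your treatment of $k\le 3$ is complete and correct. However, for $k=4$ and $k=5$ you have only described the plan, not executed it: you say you ``would list the possible patterns \ldots{} and for each pattern exhibit an explicit simple ordering,'' but those explicit orderings and their verifications \emph{are} the proof of this theorem --- there is no other content. As written, the cases $k=4,5$ are unproved.

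The gap is not purely one of bookkeeping, because your closing claim --- that when $\langle A\rangle$ is nonabelian the order-dependence of block sums ``provides enough additional freedom'' to succeed after a few transpositions --- is precisely the assertion that needs proof, and the paper's argument shows it is not automatic. For instance, in the case $|A|=5$ with two inverse pairs ($a_2=-a_1$, $a_4=-a_3$), the natural candidate $(a_5,a_1,a_3,-a_1,-a_3)$ fails exactly when the commutator $a_1+a_3-a_1-a_3$ vanishes, so the proof must branch on whether $a_1$ and $a_3$ commute; in the commuting branch the verification of the repaired orderings $(-a_3,\pm a_1,a_3,\mp a_1,a_5)$ uses the relation $a_1+a_3=a_3+a_1$ to show $\pm a_1+a_3\mp a_1+a_5=a_3+a_5\neq 0$. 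So noncommutativity is sometimes the obstruction rather than the remedy, and the dichotomy you invoke (``abelian: cite \cite{ADMS}; nonabelian: extra freedom'') does not by itself dispose of any case. To complete the proof you must actually enumerate the configurations for $k=4$ (three subcases by the number $p$ of inverse pairs) and $k=5$ (with further subcases according to which $3$- and $4$-blocks vanish) and check each proposed ordering.
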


\begin{proof}
If $|A|\leq 2$ it is obvious.
Suppose $|A|>2$ and let $p$ be the number of distinct $2$-subsets $\{x,-x\}$ contained in $A$.

Let $A=\{a_1,a_2,a_3\}$ be a subset of size $3$. If $p=0$, then the ordering $(a_1,a_2,a_3)$ is simple.
If $p=1$ we may assume $a_2=-a_1$ and take the ordering $(a_1,a_3,-a_1)$.

Let $A=\{a_1,a_2,a_3,a_4\}$ be a subset of size $4$.
If $p=0$, the ordering
$$\omega=\left\{\begin{array}{ll}
(a_1,a_2,a_3,a_4) &  \textrm{ if  } a_2+a_3+a_4\neq 0,\\
(a_2,a_1,a_3,a_4)  & \textrm{ if  } a_2+a_3+a_4=0
\end{array}\right.$$
is simple.
If $p=1$, we may assume $a_2=-a_1$: in this case, $(a_3,a_1,a_4,-a_1)$ is a simple ordering.
If $p=2$, we may assume $a_2=-a_1$ and $a_4=-a_3$: it is easy to see that $(a_1,a_3,-a_1,-a_3)$ is a simple ordering of $A$.

Now, let $A=\{a_1,a_2,a_3,a_4,a_5\}$ be a subset of size $5$.
First, we consider the case $p=0$. If $a_2+a_3+a_4+a_5\neq 0$, $a_3+a_4+a_5\neq 0$ and $a_2+a_3+a_4\neq 0$, the ordering $(a_1,a_2,a_3,a_4,a_5)$ is simple.
So, suppose that $a_2+a_3+a_4+a_5=0$. If $a_1+a_3+a_4\neq 0$, then the ordering $(a_2,a_1,a_3,a_4,a_5)$
is simple. So, assume also that $a_1+a_3+a_4=0$ and observe that
\begin{equation}\label{*}
\omega=\left\{\begin{array}{ll}
(a_2,a_1,a_3,a_5,a_4) &  \textrm{ if  } a_3+a_5+a_4\neq 0,\\
(a_5,a_3,a_2,a_4,a_1)  & \textrm{ if  } a_3+a_5+a_4=0
\end{array}\right.
\end{equation}
is a simple ordering of $A$.
Next, suppose $a_3+a_4+a_5=0$. The ordering $(a_5,a_1,a_3,a_4,$ $a_2)$ is simple, except when 
$a_1+a_3+a_4+a_2=0$. However, if this holds, in order to find a simple ordering of $A$ it suffices 
to reapply \eqref{*} to the set $\{a_1',a_2',a_3',a_4',a_5'\}$, where $a_1'=a_5$, $a_2'=a_1$, $a_3'=a_3$, $a_4'=a_4$ and $a_5'=a_2$.
Similarly, if $a_2+a_3+a_4=0$, we consider the set $\{a_1',a_2',a_3',a_4',a_5'\}$, where $a_1'=a_1$, $a_2'=a_5$, $a_3'=a_2$, $a_4'=a_3$, $a_5'=a_4$, and proceed as done previously.

If $p=1$, we may suppose that $a_2=-a_1$. If $\pm a_1+a_3+a_4\neq 0$, then $(a_5,a_1,a_3,a_4,-a_1)$ is a simple ordering of $A$. 
Suppose $\varepsilon \cdot a_1+a_3+a_4=0$ for some $\varepsilon=\pm 1$: in this case, the ordering
$$\omega=\left\{\begin{array}{ll}
(a_3,\varepsilon \cdot a_1, a_5,a_4, -\varepsilon \cdot a_1) &  \textrm{ if  } a_5+a_4\neq \varepsilon \cdot a_1,\\
(\varepsilon\cdot a_1, a_5,a_3,a_4,-\varepsilon\cdot a_1)&  \textrm{ if  } a_5+a_4= \varepsilon \cdot a_1
\end{array}\right.$$
is simple.

If $p=2$, then we may suppose that
$a_2=-a_1$ and $a_4=-a_3$. If $a_1+a_3-a_1-a_3\neq 0$, then the ordering $(a_5,a_1,a_3,-a_1,-a_3)$ is simple.
Assume $a_1+a_3-a_1-a_3=0$. Then the ordering 
$$\omega=\left\{\begin{array}{ll}
(-a_3,a_1,a_3,-a_1,a_5) &  \textrm{ if  } a_3-a_1+a_5\neq 0,\\
(-a_3,-a_1,a_3,a_1,a_5)  & \textrm{ if  } a_3-a_1+a_5=0
\end{array}\right.$$
is simple. In fact, since $a_1+a_3=a_3+a_1$, we have $\pm a_1+a_3\mp a_1+a_5=a_3+a_5\neq 0$.
\end{proof}

Alspach \cite{Als} recently proposed the following definition, closely related to Conjecture \ref{Conj:ADMS} in its generalized 
formulation: a finite group $G$ is said to be \emph{strongly sequenceable} if every Cayley digraph on $G$ admits either 
an orthogonal
directed path or an orthogonal directed cycle. 
When this path (respectively, cycle) has length $|G|-1$, we retrieve 
the concept of sequenceable (respectively, R-sequenceable)  group, see \cite{Al}.
The problem that Kalinowski and he propose is the classification of the strongly sequenceable groups. In this 
direction, Theorem \ref{A5} can be viewed as an intermediate step.

\section*{Acknowledgements}
The authors would like to thank Marco Buratti and Jeff Dinitz for useful discussion on this topic.


\begin{thebibliography}{30}
\bibitem{HB} Abel R.J.R., Buratti M., Difference Families, in: Handbook of combinatorial designs. Edited by Charles J. Colbourn and Jeffrey H. Dinitz. Second edition. Discrete Mathematics and its Applications. Chapman \& Hall/CRC, Boca Raton, 2007.

\bibitem{Als} Alspach B., Private communication.

\bibitem{Al} Alspach B.,  Kreher D. L.,  Pastine A., The Friedlander-Gordon-Miller conjecture. Australas. J. Combin. 
67, 11--24  (2017). 

\bibitem{A} Archdeacon D.S., Heffter arrays and biembedding graphs on surfaces. Electron. J.
Combin. 22 (2015) \#P1.74.
 \bibitem{ABD} Archdeacon D.S., Boothby T., Dinitz J.H., Tight Heffter arrays exist for all
 possible values. J. Combin. Des. 25, 5--35 (2017).
 \bibitem{ADDY} Archdeacon D.S., Dinitz J.H., Donovan D.M. and Emine \c{S}ule Yaz{\i}c{\i}, Square integer Heffter arrays
 with empty cells. Des. Codes Cryptogr. 77, 409--426 (2015).
\bibitem{ADMS} Archdeacon D.S., Dinitz J.H., Mattern A., Stinson D.R., On partial sums in cyclic groups.
J. Combin. Math. Combin. Comput. 98, 327--342 (2016).
\bibitem{BJL} Beth T., Jungnickel D., Lenz H., Design Theory.
Cambridge University Press, 1999.
\bibitem{BH} Bode J.P., Harborth H., Directed paths of diagonals within polytopes.
Discrete Math. 299, 3--10 (2005).
\bibitem{BR} Bryant D., Rodger C., Cycle Decompositions, in: Handbook of combinatorial designs. Edited by Charles J. Colbourn and Jeffrey H. Dinitz. Second edition. Discrete Mathematics and its Applications. Chapman \& Hall/CRC, Boca Raton, 2007.
\bibitem{B} Buratti M., Cycle decompositions with a sharply vertex transitive automorphism group. Le Matematiche 59, 91--105 (2004).
\bibitem{BDF} Buratti M., Del Fra A., Existence of cyclic $k$-cycle systems of the complete graph. Discrete Math. 261, 113--125 (2003).
\bibitem{CDF} Capparelli S., Del Fra A., Hamiltonian paths in the complete graph with edge-lengths 1,2,3. 
Electron. J. Combin. 17 (2010) \#R44.
\bibitem{CMPP} Costa S., Morini F., Pasotti A., Pellegrini M.A., Simple Heffter arrays and orthogonal cyclic cycle systems. In preparation.
\bibitem{DJ} Dinitz J.H., Janiszewski, S.R., On Hamiltonian paths with prescribed edge lengths in the complete graph. Bull. Inst. Combin. Appl., 42--52 (2009).
\bibitem{DM} Dinitz J.H., Mattern, A.R.W., Biembedding Steiner triple systems and $n$-cycle systems on orientable surfaces. Austral. J. Combin. 67, 327--344 (2017).
 \bibitem{DW} Dinitz J.H., Wanless I.M., The existence of square integer Heffter arrays.
 Ars Math. Contemp. 13, 81--93 (2017).
\bibitem{E} Evans A.B, Complete Mappings and Sequencings of Finite Groups,
in: Handbook of combinatorial designs. Edited by Charles J. Colbourn and Jeffrey H. Dinitz. Second edition. Discrete Mathematics and its Applications. Chapman \& Hall/CRC, Boca Raton, 2007.

\bibitem{H} Heffter L., Ueber Tripelsysteme. Math. Ann. 49,  101--112 (1897). 
\bibitem{HR} Horak P., Rosa A., On a problem of Marco Buratti. Electron. J. Combin. 16 (2009) \#R20.
\bibitem{PPdm} Pasotti A., Pellegrini M.A., A new result on the problem of Buratti, Horak, and Rosa.
Discrete Math. 319, 1--14 (2014).
\bibitem{PPejc} Pasotti A., Pellegrini M.A., On the Buratti-Horak-Rosa Conjecture about Hamiltonian paths in the complete graphs. Electron. J. Combin. 21 (2014) \#P2.30.
 \bibitem{PPgc} Pasotti A., Pellegrini M.A., A generalization of the problem of Mariusz Meszka.
 Graphs Combin. 32, 333--350 (2016).
 \bibitem{R} Rosa A., On a problem of Mariusz Meszka. Discrete Math. 338, 139--143 (2015).
\end{thebibliography}
\end{document}